\documentclass{amsart}

\usepackage{amssymb,latexsym,amsmath}
\usepackage{hyperref}
\usepackage{xcolor}
\hypersetup{colorlinks=true, linkcolor = blue, citecolor = red}

\newtheorem{theorem}{Theorem}[section]
\newtheorem{lemma}[theorem]{Lemma}
\newtheorem{proposition}[theorem]{Proposition}

\theoremstyle{definition}
\newtheorem{definition}[theorem]{Definition}

\theoremstyle{remark}
\newtheorem{remark}[theorem]{Remark}

\numberwithin{equation}{section}

\newcommand{\abs}[1]{\left|#1\right|}
\newcommand{\norm}[1]{\left\|#1\right\|}

\begin{document}

\title[Sobolev regularity]{Weighted Sobolev regularity of the Bergman projection on the Hartogs triangle}

\author{Liwei Chen}
\address{Department of Mathematics, The Ohio State University, Ohio 43210}
\email{chen.1690@osu.edu}

\subjclass[2010]{32A07, 32A25, 32A50.}

\date{\today}

\keywords{Hartogs triangle, Bergman projection, Sobolev regularity}

\begin{abstract}
We prove a weighted Sobolev estimate of the Bergman projection on the Hartogs triangle, where the weight is some power of the distance to the singularity at the boundary. This method also applies to the $n$-dimensional generalization of the Hartogs triangle. 
\end{abstract}

\maketitle

\section{Introduction}

\subsection{Setup and Background}

Let $\Omega$ be a domain in $\mathbb{C}^n$. The set of square integrable holomorphic functions on $\Omega$, denoted by $A^2(\Omega)$, forms a closed subspace of the Hilbert space $L^2(\Omega)$. The Bergman projection associated to $\Omega$, is the orthogonal projection
\[
\mathcal{B}:L^2(\Omega)\to A^2(\Omega),
\]
which has an integral representation
\begin{equation}
\label{kernel}
\mathcal{B}(f)(z)=\int_{\Omega}B(z,\zeta)f(\zeta)\,d(\zeta),
\end{equation}
for all $f\in L^2(\Omega)$ and $z\in\Omega$. Here the function $B(z,\zeta)$ defined on $\Omega\times\Omega$ is the Bergman kernel.

The regularity of the Bergman projection $\mathcal{B}$ associated to $\Omega$ in $L^p(\Omega)$, $W^{k,p}(\Omega)$, and H\"{o}lder spaces are of particular interest. When $\Omega$ is bounded, smooth, and pseudoconvex with additional geometric condition on the boundary (e.g. strongly pseudoconvex), the regularity of $\mathcal{B}$ in these spaces have been intensively studied through the literature. See, for example, \cite{LS2} and references therein for details.

When $\Omega$ is non-smooth, there are relatively few results in considering the regularity of the Bergman projection. Even in $L^p(\Omega)$, we cannot expect the regularity to hold for all $p\in(1,\infty)$. If $\Omega$ is a simply connected planar domain, then the interval of $p$ for $\mathcal{B}$ to be $L^p$-bounded highly depends on the geometry of the boundary, see \cite{LS1}. If $\Omega$ is a non-smooth worm domain, then the interval of $p$ depends on the winding of the domain, see \cite{KP}. If $\Omega$ is an inflation of the unit disc by the norm square of a non-vanishing holomorphic function, then the interval of $p$ depends on the boundary behavior of the holomorphic function on the unit disc, see \cite{Z}.

\subsection{Results}

In this article, we consider the Sobolev regularity of the Bergman projection $\mathcal{B}$ on the Hartogs triangle $\mathbb{H}$, where the Hartogs triangle is defined as
\[
\mathbb{H}=\{(z_1,z_2)\in\mathbb{C}^2\,|\,\abs{z_1}<\abs{z_2}<1\}.
\]
The Hartogs triangle is a classical non-smooth domain in $\mathbb{C}^2$. It is well known that the boundary at $(0,0)$ is not even Lipschitz, and the topological closure of $\mathbb{H}$ does not possess a Stein neighborhood basis. In \cite{C1}, the $L^p$ regularity of $\mathcal{B}$ on $\mathbb{H}$ has been studied: the Bergman projection $\mathcal{B}$ is $L^p$-bounded if and only if $p\in(4/3,4)$. On the other hand, we have $\overline{z}_2\in W^{k,p}(\mathbb{H})$ for all non-negative integer $k$ and all $p\in[1,\infty]$, but $\mathcal{B}(\overline{z}_2)=c/z_2\notin W^{1,p}(\mathbb{H})$ for $p\ge2$, where $c$ is some non-zero constant. So we cannot expect to obtain regularity in the ordinary Sobolev spaces, nor for all $p\in(1,\infty)$.

A natural way to control the boundary behavior of singularity is the use of weights which measure the distance from the points near the boundary to the singularity at the boundary. Since on the Hartogs triangle we have $\abs{z_2}<\abs{z}<\sqrt{2}\abs{z_2}$, where $z = (z_1, z_2)\in\mathbb{H}$, it is reasonable to consider a weight of the form $\abs{z_2}^s$, for some $s\in\mathbb{R}$. On the other hand, based on the $L^p$ mapping property of the Bergman projection on $\mathbb{H}$ (see \cite{CZ}) and the Sobolev regularity of the weighted canonical solution operator of the $\overline{\partial}$-equation on $\mathbb{H}$ (see \cite{CS2}), it is also suggested to put a weight of the form $\abs{z_2}^s$ on the target space. Therefore, we consider the following weighted Sobolev spaces.

\begin{definition}
\label{Sobolev}
On the Hartogs triangle $\mathbb{H}$, for each $k\in\mathbb{Z}^+\cup\{0\}$, $s\in\mathbb{R}$, and $p\in(1,\infty)$, we define the \textit{weighted Sobolev space} by
\[
W^{k,p}(\mathbb{H},\delta^{s})=\{f\in L^1_{\text{loc}}(\mathbb{H})\,|\,\norm{f}_{k,p,s}<\infty\},
\]
where $\delta(z)=\abs{z_2}\approx\abs{z}$ and the norm is defined as
\[
\norm{f}_{k,p,s}=\Big(\int_{\mathbb{H}}\sum_{\abs{\alpha}\le k}\abs{D_{z,\overline{z}}^{\alpha}(f)(z)}^p\abs{z_2}^s\,dz\Big)^{\frac{1}{p}}.
\]
Here $\alpha=(\alpha_1,\alpha_2,\alpha_3,\alpha_4)$ is the multi-index running over all $\abs{\alpha}\le k$, and
\[
D_{z,\overline{z}}^{\alpha}=\frac{\partial^{\abs{\alpha}}}{\partial z_1^{\alpha_1}\partial z_2^{\alpha_2}\partial\overline{z}_1^{\alpha_3}\partial\overline{z}_2^{\alpha_4}}.
\]
We also denote the usual norm in the (unweighted) Sobolev space $W^{k,p}(\mathbb{H})$ by 
\[
\norm{f}_{k,p}=\Big(\int_{\mathbb{H}}\sum_{\abs{\alpha}\le k}\abs{D^{\alpha}_{z,\overline{z}}(f)(z)}^p\,dz\Big)^{\frac{1}{p}}.
\]
\end{definition}

With the definition above, we can state our main result as follows.

\begin{theorem}
\label{main}
The Bergman projection $\mathcal{B}$ on the Hartogs triangle $\mathbb{H}$ maps continuously from $W^{k,p}(\mathbb{H})$ to $W^{k,p}(\mathbb{H},\delta^{kp})$ for $p\in(4/3,4)$.

That is, for each $k\in\mathbb{Z}^+\cup\{0\}$ and $p\in(4/3,4)$, there exits a constant $C_{k,p}>0$, so that
\[
\norm{\mathcal{B}(f)}_{k,p,kp}\le C_{k,p}\norm{f}_{k,p},
\]
for any $f\in W^{k,p}(\mathbb{H})$.
\end{theorem}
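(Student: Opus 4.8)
The key simplifying feature of $\mathbb{H}$ is that the biholomorphism $\Phi(z_1,z_2)=(z_1/z_2,z_2)$ maps $\mathbb{H}$ onto the product domain $\mathbb{D}^*\times\mathbb{D}$ (where $\mathbb{D}^*=\mathbb{D}\setminus\{0\}$), on which the Bergman kernel factors as a product of one-variable kernels. This yields a closed-form expression for $B(z,\zeta)$ on $\mathbb{H}$, and consequently an explicit formula for $\mathcal{B}(f)$. I would first record this formula and note that the weight $\delta^{kp}=|z_2|^{kp}$ in the target is precisely what is needed to absorb the $k$ derivatives: each differentiation in $z_1$ or $z_2$ of the kernel produces an extra factor of roughly $|z_2|^{-1}$ in size (reflecting the scaling $z_1\sim z_2$), and the weight $|z_2|^{kp}$ exactly compensates for the $k$-th order loss under the $L^p$ norm.

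\textbf{The main line of argument proceeds by reduction to the $L^p$ case $k=0$.} For $k=0$ the statement is exactly the known result of \cite{C1} that $\mathcal{B}$ is $L^p$-bounded for $p\in(4/3,4)$, so there is nothing to prove. For $k\ge 1$, the strategy is to show that any $k$-th order derivative $D^\alpha_{z,\overline z}\mathcal{B}(f)$ can be written as $|z_2|^{-k}$ times a finite sum of operators of the form $\mathcal{T}_\beta(D^\beta f)$, where $|\beta|\le k$ and each $\mathcal{T}_\beta$ is an integral operator whose kernel has the same essential size and cancellation properties as the Bergman kernel itself. Concretely, I would commute derivatives past the projection: writing $\mathcal{B}(f)$ via the explicit kernel, one differentiates under the integral sign in $z$, then uses integration by parts in $\zeta$ to transfer derivatives onto $f$. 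Because holomorphic derivatives $\partial_{z_j}$ acting on the holomorphic kernel $B(z,\zeta)$ can be paired (via the reproducing property and the product structure) with $\partial_{\zeta_j}$ acting on $f$, the transferred operators remain Calder\'on--Zygmund-type operators adapted to the Hartogs geometry. Collecting the powers of $|z_2|^{-1}$ produced in this process accounts for exactly the $|z_2|^{kp}$ weight after raising to the $p$-th power and integrating.

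\textbf{The decisive estimate is then the weighted $L^p$ boundedness of each auxiliary operator $\mathcal{T}_\beta$.} Here I would transfer the problem through the biholomorphism $\Phi$: a change of variables sends the weighted integral over $\mathbb{H}$ to an integral over $\mathbb{D}^*\times\mathbb{D}$, where the weight $|z_2|^{kp}$ combines with the Jacobian $|z_2|^{-2}$ of $\Phi$ and the product structure of the kernel to produce a tensor-product operator. On the first factor $\mathbb{D}$ (the $z_1/z_2$ variable) the relevant operator is the ordinary Bergman/Beurling-type transform on the disc, bounded on $L^p$ for all $p\in(1,\infty)$; on the second factor the range $p\in(4/3,4)$ enters through the weighted $L^p$-mapping behavior governed by the singular weight, exactly as in the $k=0$ analysis of \cite{C1} and the companion estimates of \cite{CZ,CS2}. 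Verifying that the weight exponents line up so that every $\mathcal{T}_\beta$ lands in the admissible Muckenhoupt-type range — and that the integration-by-parts boundary terms at the singular set $\{z_2=0\}$ genuinely vanish rather than contribute — is the step I expect to be most delicate, since the non-Lipschitz singularity at the origin is precisely where naive estimates fail and where the choice of weight exponent $kp$ (rather than any smaller power) becomes forced.
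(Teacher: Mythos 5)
Your outline follows the same road map as the paper: transfer to the product model $\mathbb{D}\times\mathbb{D}^*$ via $\Phi(z)=(z_1/z_2,z_2)$, track the powers of $z_2^{-1}$ produced by $k$ derivatives (which is exactly what the weight $|z_2|^{kp}$ absorbs), move the derivatives from the kernel onto $f$ by integration by parts, and finish with the unweighted $L^p$ theory on $\mathbb{D}$ together with the weighted $L^p$ theory on $\mathbb{D}^*$ from \cite{C2}. However, the step you yourself flag as ``most delicate'' --- making the integration by parts in $\zeta$ work with no contribution from the singular set --- is precisely the heart of the proof, and your description of it does not work as stated. You propose to ``pair $\partial_{z_j}$ acting on the kernel with $\partial_{\zeta_j}$ acting on $f$,'' but the Bergman kernel is \emph{anti}-holomorphic in $\zeta$, so integrating by parts against $\partial_{\zeta_j}$ returns zero on the kernel plus uncontrolled boundary terms; and integrating by parts against $\partial_{\overline{\zeta}_j}$ near the puncture $\{\eta_2=0\}$ and near $\partial\mathbb{D}$ produces boundary integrals that there is no reason to discard. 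The paper's resolution has two ingredients you are missing: (i) the Reinhardt symmetry of the punctured disc gives $w_2^b\partial_{w_2}^b K=\overline{\eta}_2^b\partial_{\overline{\eta}_2}^b K$ for the kernel $K=(w_2\overline{\eta}_2)^{-1}(1-w_2\overline{\eta}_2)^{-2}$, converting $z$-derivatives of the kernel into $\overline{\eta}_2$-derivatives at the cost of exactly the factor $w_2^{-b}$ you want; and (ii) the operator $\overline{\eta}_2^b\partial_{\overline{\eta}_2}^b$ is, modulo $\partial_{\eta_2}$ (which annihilates the anti-holomorphic kernel), a combination of powers of the tangential operator $T_{\eta_2}=\frac{1}{2i}(\eta_2\partial_{\eta_2}-\overline{\eta}_2\partial_{\overline{\eta}_2})$, which is tangential to every circle $|\eta_2|=\mathrm{const}$. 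Integration by parts on the annuli $\mathbb{D}-\mathbb{D}_{\epsilon}$ therefore produces \emph{no} boundary terms at all, neither at $\partial\mathbb{D}$ nor at the puncture, and the operators landing on $f$ are the $T_{\eta_2}^j$, which are controlled by the full (holomorphic and anti-holomorphic) derivatives in the $W^{k,p}$ norm --- not just the $\partial_{\zeta_j} f$ your sketch suggests.

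Two further points you should not omit. First, the integration by parts requires $f\circ\Psi$ to be smooth on $\overline{\mathbb{D}}\setminus\{0\}$ in $\eta_2$, so a density lemma (smooth functions away from the puncture are dense in $W^{k,p}(\mathbb{D}^*,|w|^2)$) plus a limiting argument through the bounded operator $\mathcal{B}_2$ is needed to treat general $f\in W^{k,p}(\mathbb{H})$; your proposal is silent on this. Second, the decisive one-variable estimate on the $z_2$-factor is not a Muckenhoupt-type statement for the ordinary Bergman projection but the boundedness of the \emph{weighted} Bergman projection of $\mathbb{D}^*$ with weight $|w|^2$ on $L^p(\mathbb{D}^*,|w|^2)$ for $p\in(4/3,4)$, which is where the restriction on $p$ enters; quoting the $k=0$ result on $\mathbb{H}$ alone does not substitute for it, because after the change of variables and the loss of $w_2^{-(|\beta|+1)}$ the exponent of the weight in the integral is $2-p$, not $0$.
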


\begin{remark}
Note that there is no loss of differentiability of $\mathcal{B}(f)$. This improves the previous result in \cite{CS2}. 
\end{remark}

\begin{remark}
Note that we have $\mathcal{B}(\overline{z}_2)=c/z_2\notin W^{k,p}(\mathbb{H},\delta^{kp})$ for $p\ge4$, where $c$ is some non-zero constant. So we cannot obtain regularity for $p\ge4$, unless we use more weights on the target space. Conversely, we can only obtain regularity for fewer values of $p$, if we use less weights on the target space.
\end{remark}

\subsection{Organization and Outline}

The idea of the proof of the main result is the following. In section \ref{sec2}, we start with an idea from \cite{CS2} to transfer $\mathbb{H}$ to the product model $\mathbb{D}\times\mathbb{D}^*$, as well as to transfer the differential operators $D^{\alpha}$ to the ones in new variables. From this, we focus on the integration over the punctured disc $\mathbb{D}^*$ in section \ref{sec3}. We then use an idea from \cite{Str1} to convert $D^{\alpha}$ acting on the Bergman kernel in the holomorphic component to the ones acting on the kernel in the anti-holomorphic part. The resulting differential operators can be written as a combination of tangential operators, and therefore, integration by parts applies to the smooth functions. Finally, in section \ref{sec4}, we apply the weighted $L^p$ estimates in \cite{C2} to our integral, and the resulting integral is majorized by the weighted $L^p$ norm of $D^{\alpha}(f)$. It will complete the proof if we approximate the weighted Sobolev functions by smooth functions and transfer the product model back to $\mathbb{H}$.

\subsection*{Acknowledgements}

The content of this paper is a part of the author's Ph.D.thesis at Washington University in St. Louis, see \cite{Chen}. The author would like to thank his thesis advisor Prof. S. G. Krantz for giving him a very interesting problem to work on and lots of guidance on his research. The author also wants to thank Prof. E. J. Straube for very helpful comments and suggestions on his work.

\section{Transfer to the Product Model}
\label{sec2}

\subsection{Transfer $\mathbb{H}$ to $\mathbb{D}\times\mathbb{D}^*$}

In view of Definition \ref{Sobolev}, we adopt the following notations.
\begin{definition}
\label{differential}
Let $\beta=(\beta_1,\beta_2)$ be a multi-index, we use the notations below to denote the differential operators
\[
D_{z}^{\beta}=\frac{\partial^{\abs{\beta}}}{\partial z_1^{\beta_1}\partial z_2^{\beta_2}}
\]
and
\[
D_{z_j,\overline{z}_j}^{\beta}=\frac{\partial^{\abs{\beta}}}{\partial z_j^{\beta_1}\partial\overline{z}_j^{\beta_2}}
\]
for $j=1,2$.
\end{definition}

From the result in \cite{C1}, we see that $\mathcal{B}(f)\in A^p(\mathbb{H})$ (the set of $L^p$ functions that are holomorphic), whenever $p\in(4/3,4)$ and $f\in L^p(\mathbb{H})$. So we can rewrite the weighted $L^p$ Sobolev norm of $\mathcal{B}(f)$ as

\begin{equation}
\label{eq1}
\norm{\mathcal{B}(f)}_{k,p,kp}^p=\sum_{\abs{\beta}\le k}\int_{\mathbb{H}}\abs{D_{z}^{\beta}(\mathcal{B}(f))(z)}^p\abs{z_2}^{kp}\,dz,
\end{equation}
where $\beta$ and $D_{z}^{\beta}$ are as in Definition \ref{differential}.

In order to transfer $\mathbb{H}$ to the product model, we first recall the transformation formula for the Bergman kernels.

\begin{proposition}
Let $\Omega_j$ be a domain in $\mathbb{C}^n$ and $B_j$ be its Bergman kernel on $\Omega_j\times\Omega_j$, $j=1,2$. Suppose $\Psi:\Omega_1\to\Omega_2$ is a biholomorphism, then for $(w,\eta)\in\Omega_1\times\Omega_1$ we have
\[
\det J_{\mathbb{C}}\Psi(w)B_2(\Psi(w),\Psi(\eta))\det\overline{J_{\mathbb{C}}\Psi(\eta)}=B_1(w,\eta).
\]
\end{proposition}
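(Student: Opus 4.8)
The plan is to realize the biholomorphism $\Psi$ as a unitary isomorphism between the two Bergman spaces and then to read the identity off from the series expansion of each Bergman kernel in a matched orthonormal basis. The one analytic ingredient that drives everything is the fact that the \emph{real} Jacobian determinant of a holomorphic map $\Psi$ equals $\abs{\det J_{\mathbb{C}}\Psi}^2$. Combined with the ordinary change-of-variables formula, this identity transfers the $L^2$ inner product between $\Omega_1$ and $\Omega_2$: substituting $z=\Psi(w)$ turns $dV(z)$ into $\abs{\det J_{\mathbb{C}}\Psi(w)}^2\,dV(w)$, and the square of the Jacobian is exactly what will be absorbed by pairing two pullbacks.

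Accordingly, I would first define the pullback operator
\[
U\colon A^2(\Omega_2)\to A^2(\Omega_1),\qquad Uf=(\det J_{\mathbb{C}}\Psi)\cdot(f\circ\Psi).
\]
Since $\Psi$ is holomorphic with nowhere-vanishing complex Jacobian, $Uf$ is again holomorphic, and the change-of-variables identity above shows immediately that $\norm{Uf}_{L^2(\Omega_1)}=\norm{f}_{L^2(\Omega_2)}$, so $U$ is an isometry. Performing the same construction with $\Psi^{-1}$ in place of $\Psi$ produces a two-sided inverse for $U$, whence $U$ is a unitary isomorphism of $A^2(\Omega_2)$ onto $A^2(\Omega_1)$.

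Next I would fix an orthonormal basis $\{\phi_j\}$ of $A^2(\Omega_2)$; by the previous step $\{U\phi_j\}$ is then an orthonormal basis of $A^2(\Omega_1)$. Using the basis-independent representation of each Bergman kernel as the sum over its own orthonormal basis of the products $\phi_j(\cdot)\,\overline{\phi_j(\cdot)}$, I would substitute $U\phi_j(w)=\det J_{\mathbb{C}}\Psi(w)\,\phi_j(\Psi(w))$ into the expansion of $B_1(w,\eta)=\sum_j U\phi_j(w)\,\overline{U\phi_j(\eta)}$. The factors $\det J_{\mathbb{C}}\Psi(w)$ and $\overline{\det J_{\mathbb{C}}\Psi(\eta)}$ come out of the sum, and the remaining series collapses to $B_2(\Psi(w),\Psi(\eta))$, which is precisely the asserted formula.

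I expect the main obstacle to be the unitarity of $U$: the isometry half is routine via change of variables, but surjectivity must be checked, namely that every $g\in A^2(\Omega_1)$ equals $Uf$ for some $f\in A^2(\Omega_2)$, and this is exactly where invertibility of $\Psi$ and non-vanishing of its Jacobian are essential. An alternative that avoids the basis bookkeeping is to take the right-hand side as a candidate kernel $K(w,\eta)$, verify directly that it is holomorphic and square integrable in $w$ and reproduces $A^2(\Omega_1)$, and then invoke uniqueness of the reproducing kernel to conclude $K=B_1$; I would likely present the unitary-operator argument as the cleaner route.
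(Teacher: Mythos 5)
Your proof is correct, and the paper itself gives no argument for this proposition — it simply cites Krantz, Proposition 1.4.12; your unitary-pullback construction is precisely the standard proof found there (isometry of $Uf=(\det J_{\mathbb{C}}\Psi)\cdot(f\circ\Psi)$ via $\abs{\det J_{\mathbb{C}}\Psi}^2$ being the real Jacobian, surjectivity from the chain rule $\det J_{\mathbb{C}}\Psi^{-1}(\Psi(w))\det J_{\mathbb{C}}\Psi(w)=1$, then the orthonormal-basis expansion of the kernel). Nothing further is needed.
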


\begin{proof}
See, for example, \cite[Proposition 1.4.12]{K}.
\end{proof}

Now let us consider the biholomorphism
\[
\Phi:\mathbb{H}\to\mathbb{D}\times\mathbb{D}^*
\]
with its inverse
\[
\Psi:\mathbb{D}\times\mathbb{D}^*\to\mathbb{H},
\]
where
\[
\Phi(z_1,z_2)=(\frac{z_1}{z_2},z_2)\,\,\,\,\,\text{and}\,\,\,\,\,\Psi(w_1,w_2)=(w_1w_2,w_2).
\]
A simple computation shows $\det J_{\mathbb{C}}\Psi(w)=w_2$, for $w=(w_1,w_2)\in\mathbb{D}\times\mathbb{D}^*$. Therefore, by the proposition above, we have
\begin{equation}
\label{trans}
B(\Psi(w),\Psi(\eta))=\frac{1}{w_2\overline{\eta}_2}\cdot\frac{1}{(1-w_1\overline{\eta}_1)^2}\cdot\frac{1}{(1-w_2\overline{\eta}_2)^2},
\end{equation}
where $B$ is the Bergman kernel on $\mathbb{H}\times\mathbb{H}$ as in \eqref{kernel} and $(w,\eta)\in\mathbb{D}\times\mathbb{D}^*\times\mathbb{D}\times\mathbb{D}^*$.

\subsection{Transfer the Differential Operators}

We next need to transfer the differential operators $D_{z}^{\beta}$ to the ones in the new variable $w$. We need a lemma.

\begin{lemma}
\label{lem}
Under the above biholomorphism $\Phi(z)=w$, for each $\beta$ let $m=\abs{\beta}$, we have
\begin{equation}
\label{DztoDw}
D_{z}^{\beta}=\sum_{a+b\le m}\frac{p_{a,b,\beta}(w_1)}{w_2^{m-b}}\cdot\frac{\partial^{a+b}}{\partial w_1^a\partial w_2^b},
\end{equation}
where $p_{a,b,\beta}(w_1)$ is a polynomial of degree at most $m$ in variable $w_1$. In addition, if $\abs{\beta}\le k$ for some $k\in\mathbb{Z}^+\cup\{0\}$, then $\abs{p_{a,b,\beta}(w_1)}\le C_k$ on $\mathbb{D}$ uniformly in $\beta$, $a$, and $b$, for some constant $C_k>0$ depending only on $k$.
\end{lemma}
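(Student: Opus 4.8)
The plan is to prove the identity \eqref{DztoDw} by induction on $m=\abs{\beta}$, after first recording the two first-order chain-rule identities and then propagating them through the composition $\partial_{z_1}^{\beta_1}\partial_{z_2}^{\beta_2}$. First I would compute the pullbacks of the two basic holomorphic vector fields. Since $w_1=z_1/z_2$ and $w_2=z_2$, the chain rule gives
\[
\frac{\partial}{\partial z_1}=\frac{1}{w_2}\frac{\partial}{\partial w_1},\qquad \frac{\partial}{\partial z_2}=-\frac{w_1}{w_2}\frac{\partial}{\partial w_1}+\frac{\partial}{\partial w_2}.
\]
These are the base ($m=1$) cases of \eqref{DztoDw}, while the identity operator is the ($m=0$) case.

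For the induction I would fix the precise invariants to track: at order $m$ every term has the form $\frac{p_{a,b}(w_1)}{w_2^{m-b}}\partial_{w_1}^a\partial_{w_2}^b$ with $a+b\le m$, the exponent of $w_2$ in the denominator being \emph{exactly} $m-b$, and $\deg p_{a,b}\le m$. The inductive step consists of applying one more factor of $\partial/\partial z_1$ or $\partial/\partial z_2$ to a generic term and checking that both invariants persist. Applying $\frac{1}{w_2}\partial_{w_1}$ produces $\frac{p_{a,b}'}{w_2^{m+1-b}}\partial_{w_1}^a\partial_{w_2}^b$ and $\frac{p_{a,b}}{w_2^{m+1-b}}\partial_{w_1}^{a+1}\partial_{w_2}^b$, both with the correct new exponent $(m+1)-b$. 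Applying $-\frac{w_1}{w_2}\partial_{w_1}+\partial_{w_2}$ is the delicate piece: the $\partial_{w_2}$ half differentiates the singular factor $w_2^{-(m-b)}$ and produces $-\frac{(m-b)p_{a,b}}{w_2^{m+1-b}}\partial_{w_1}^a\partial_{w_2}^b$ together with $\frac{p_{a,b}}{w_2^{m-b}}\partial_{w_1}^a\partial_{w_2}^{b+1}$, and here the index shift $b\mapsto b+1$ in the second term is exactly what keeps the exponent equal to $(m+1)-(b+1)=m-b$, so the invariant is maintained. Collecting all contributions and relabeling $(a,b)$ yields \eqref{DztoDw} at order $m+1$, with each new polynomial a finite sum of the operations $p\mapsto p'$, $p\mapsto -w_1p$, and $p\mapsto -(m-b)p$ applied to the previous polynomials; each of these raises the degree by at most one, so the bound $\deg p_{a,b,\beta}\le\abs{\beta}$ passes from order $m$ to order $m+1$.

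Finally, for the uniform estimate I would track the coefficients of the polynomials through the same recursion. Write $M_m$ for the maximum modulus of all coefficients of all polynomials appearing at order $m$. Each output polynomial is a sum of at most four contributions, and each of the operations $p\mapsto p'$, $p\mapsto -w_1p$, $p\mapsto -(m-b)p$ scales the coefficients by a factor at most $k+1$ when $m\le k$. Hence $M_{m+1}\le 4(k+1)M_m$, and since $M_0=1$ this gives $M_m\le(4(k+1))^k$ for all $m\le k$. On $\mathbb{D}$ we have $\abs{w_1}<1$, so $\abs{p_{a,b,\beta}(w_1)}\le (m+1)M_m\le (k+1)(4(k+1))^k=:C_k$, a bound depending only on $k$ and uniform in $\beta$, $a$, and $b$. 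The step requiring the most care is the bookkeeping in the induction, in particular verifying that differentiating the singular factor $w_2^{-(m-b)}$ keeps the denominator exponent synchronized with both the order and the index $b$; once the structure of \eqref{DztoDw} is pinned down, the degree and coefficient estimates follow from the explicit recursion.
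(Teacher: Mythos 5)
Your proposal is correct and follows essentially the same route as the paper: induction on $m=\abs{\beta}$ from the two first-order chain-rule identities, with the same bookkeeping of the exponent $m-b$ in the denominator and the degree of $p_{a,b,\beta}$. Your coefficient recursion $M_{m+1}\le 4(k+1)M_m$ is just a more quantitative version of the paper's observation that only finitely many coefficients occur for $\abs{\beta}\le k$, so nothing essential differs.
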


\begin{proof}
We prove \eqref{DztoDw} by induction on $m=\abs{\beta}$. The case $m=0$ is trivial. When $m=1$, a direct computation shows
\[
\frac{\partial}{\partial z_1}=\frac{1}{w_2}\cdot\frac{\partial}{\partial w_1}
\]
and
\[
\frac{\partial}{\partial z_2}=-\frac{w_1}{w_2}\cdot\frac{\partial}{\partial w_1}+\frac{\partial}{\partial w_2}.
\]
So both of $\frac{\partial}{\partial z_1}$ and $\frac{\partial}{\partial z_2}$ are of the form in \eqref{DztoDw}.

Suppose for all $\beta$ with $\abs{\beta}=m$, $D_{z}^{\beta}$'s are of the form in \eqref{DztoDw}, we now check the case $\abs{\beta'}=m+1$. Note that $D_{z}^{\beta'}=\frac{\partial}{\partial z_1}\circ D_{z}^{\beta}$ or $D_{z}^{\beta'}=\frac{\partial}{\partial z_2}\circ D_{z}^{\beta}$ for some $\beta$. By the inductive assumption, we have
\begin{align*}
\frac{\partial}{\partial z_1}\circ D_{z}^{\beta}
&=\frac{1}{w_2}\cdot\frac{\partial}{\partial w_1}\circ\sum_{a+b\le m}\frac{p_{a,b,\beta}(w_1)}{w_2^{m-b}}\cdot\frac{\partial^{a+b}}{\partial w_1^a\partial w_2^b}\\
&=\sum_{a+b\le m}\frac{p'_{a,b,\beta}(w_1)}{w_2^{m+1-b}}\cdot\frac{\partial^{a+b}}{\partial w_1^a\partial w_2^b}+\frac{p_{a,b,\beta}(w_1)}{w_2^{m+1-b}}\cdot\frac{\partial^{a+b+1}}{\partial w_1^{a+1}\partial w_2^b}\\
&=\sum_{a+b\le m+1}\frac{p_{a,b,\beta'}(w_1)}{w_2^{m+1-b}}\cdot\frac{\partial^{a+b}}{\partial w_1^a\partial w_2^b},
\end{align*}
and
\begin{align*}
\frac{\partial}{\partial z_2}\circ D_{z}^{\beta}
&=\left(-\frac{w_1}{w_2}\cdot\frac{\partial}{\partial w_1}+\frac{\partial}{\partial w_2}\right)\circ\sum_{a+b\le m}\frac{p_{a,b,\beta}(w_1)}{w_2^{m-b}}\cdot\frac{\partial^{a+b}}{\partial w_1^a\partial w_2^b}\\
&=\sum_{a+b\le m}\frac{-w_1p'_{a,b,\beta}(w_1)}{w_2^{m+1-b}}\cdot\frac{\partial^{a+b}}{\partial w_1^a\partial w_2^b}+\frac{-w_1p_{a,b,\beta}(w_1)}{w_2^{m+1-b}}\cdot\frac{\partial^{a+b+1}}{\partial w_1^{a+1}\partial w_2^b}+\\
&\,\,\,\,\,\,\,\,\,\,\,\,\,\,\,\,\,\,\,\,\,\,\,\,\,\frac{(b-m)p_{a,b,\beta}(w_1)}{w_2^{m+1-b}}\cdot\frac{\partial^{a+b}}{\partial w_1^a\partial w_2^b}+\frac{p_{a,b,\beta}(w_1)}{w_2^{m-b}}\cdot\frac{\partial^{a+b+1}}{\partial w_1^a\partial w_2^{b+1}}\\
&=\sum_{a+b\le m+1}\frac{p_{a,b,\beta'}(w_1)}{w_2^{m+1-b}}\cdot\frac{\partial^{a+b}}{\partial w_1^a\partial w_2^b}.
\end{align*}
We see that $p_{a,b,\beta'}(w_1)$ is a polynomial of degree at most $m+1$ and $D_{z}^{\beta'}$ has the form in \eqref{DztoDw}.

When $\abs{\beta}\le k$, all the possible combinations of derivatives in $D_{z}^{\beta}$ are finite. So there are finitely many different coefficients in all of the $p_{a,b,\beta}(w_1)$'s. Since $\abs{w_1}\le1$ on $\mathbb{D}$ and $a,b\le m\le k$, we obtain $\abs{p_{a,b,\beta}(w_1)}\le C_k$ on $\mathbb{D}$ as desired.
\end{proof}

Now we can transfer $\mathbb{H}$ to the product model $\mathbb{D}\times\mathbb{D}^*$ by the biholomorphism $\Phi$. Combining \eqref{trans} and \eqref{DztoDw}, we see that the right hand side of \eqref{eq1} becomes

\begin{equation}
\label{eq2}
\sum_{\abs{\beta}\le k}\int_{\mathbb{D}\times\mathbb{D}^*}\Big|\sum_{a+b\le\abs{\beta}}\int_{\mathbb{D}\times\mathbb{D}^*}K_{a,b,\beta}(w,\eta)f(\Psi(\eta))\abs{\eta_2}^2\,d\eta\Big|^p\abs{w_2}^{kp+2}\,dw,
\end{equation}
where
\[
K_{a,b,\beta}(w,\eta)=\frac{p_{a,b,\beta}(w_1)}{w_2^{\abs{\beta}-b}}\cdot\frac{\partial^a}{\partial w_1^a}\Big(\frac{1}{(1-w_1\overline{\eta}_1)^2}\Big)\cdot\frac{\partial^b}{\partial w_2^b}\Big(\frac{1}{w_2\overline{\eta}_2}\cdot\frac{1}{(1-w_2\overline{\eta}_2)^2}\Big).
\]

\section{Convert the Differential Operators on $\mathbb{D}^*$}
\label{sec3}

\subsection{Convert to the Anti-holomorphic Part}

Since $\mathbb{D}^*$ is a Reinhardt domain, by using the idea in \cite{Str1}, we can convert the differential operators as follows.

\begin{lemma}
As in \eqref{eq2}, for the last factor in $K_{a,b,\beta}(w,\eta)$, we have
\begin{equation}
\label{DwtoDeta}
\frac{\partial^b}{\partial w_2^b}\Big(\frac{1}{w_2\overline{\eta}_2}\cdot\frac{1}{(1-w_2\overline{\eta}_2)^2}\Big)=\frac{\overline{\eta}_2^b}{w_2^b}\cdot\frac{\partial^b}{\partial\overline{\eta}_2^b}\Big(\frac{1}{w_2\overline{\eta}_2}\cdot\frac{1}{(1-w_2\overline{\eta}_2)^2}\Big).
\end{equation}
\end{lemma}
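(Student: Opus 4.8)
The plan is to exploit the fact that the expression inside the derivative depends on $w_2$ and $\overline{\eta}_2$ only through their product. First I would set $t=w_2\overline{\eta}_2$ and write the relevant factor as a function of this single variable, namely $F(t)=\frac{1}{t(1-t)^2}$, so that
\[
\frac{1}{w_2\overline{\eta}_2}\cdot\frac{1}{(1-w_2\overline{\eta}_2)^2}=F(w_2\overline{\eta}_2).
\]
This is precisely the manifestation of the Reinhardt structure of $\mathbb{D}^*$ alluded to before the statement: the relevant part of the Bergman kernel is a function of the product $w_2\overline{\eta}_2$ alone.

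Next I would apply the chain rule separately in each variable, treating $\overline{\eta}_2$ (respectively $w_2$) as a constant. Since $t$ is linear in $w_2$ with $\partial t/\partial w_2=\overline{\eta}_2$, differentiating $F(w_2\overline{\eta}_2)$ a total of $b$ times in $w_2$ simply pulls out a factor $\overline{\eta}_2$ at each step, giving $\frac{\partial^b}{\partial w_2^b}F(w_2\overline{\eta}_2)=\overline{\eta}_2^b\,F^{(b)}(w_2\overline{\eta}_2)$. By the same reasoning, differentiating $b$ times in $\overline{\eta}_2$ yields $\frac{\partial^b}{\partial\overline{\eta}_2^b}F(w_2\overline{\eta}_2)=w_2^b\,F^{(b)}(w_2\overline{\eta}_2)$.

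Comparing the two identities, the common factor $F^{(b)}(w_2\overline{\eta}_2)$ can be eliminated: the second one gives $F^{(b)}(w_2\overline{\eta}_2)=w_2^{-b}\frac{\partial^b}{\partial\overline{\eta}_2^b}F(w_2\overline{\eta}_2)$, and substituting into the first produces
\[
\frac{\partial^b}{\partial w_2^b}F(w_2\overline{\eta}_2)=\frac{\overline{\eta}_2^b}{w_2^b}\cdot\frac{\partial^b}{\partial\overline{\eta}_2^b}F(w_2\overline{\eta}_2),
\]
which is exactly \eqref{DwtoDeta}. I do not expect a genuine obstacle here; the only point requiring care is to verify that the single-variable reduction is legitimate, i.e. that the factor really is a function of $t=w_2\overline{\eta}_2$ alone, so that $w_2$ and $\overline{\eta}_2$ enter only through $t$, after which the identity is a formal consequence of the chain rule. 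An alternative, fully self-contained route would be to prove the identity directly by induction on $b$: the base case $b=1$ reads $\frac{\partial}{\partial w_2}g=\frac{\overline{\eta}_2}{w_2}\frac{\partial}{\partial\overline{\eta}_2}g$ (with $g$ the factor on the left of \eqref{DwtoDeta}), and the inductive step follows from the commutation relations between $\partial/\partial w_2$, $\partial/\partial\overline{\eta}_2$, and multiplication by powers of $w_2$ and $\overline{\eta}_2$; but the chain-rule argument above is cleaner.
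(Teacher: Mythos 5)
Your argument is correct, and it reaches the identity by a genuinely different (and cleaner) route than the paper. The paper expands the kernel in its monomial series $\sum_{j\ge 0}(j+1)(w_2\overline{\eta}_2)^{j-1}$, verifies that $w_2^b\partial_{w_2}^b$ and $\overline{\eta}_2^b\partial_{\overline{\eta}_2}^b$ act identically on each term $(w_2\overline{\eta}_2)^{j-1}$, and sums, invoking uniform convergence on compact subsets of $\mathbb{D}^*\times\mathbb{D}^*$ to justify term-by-term differentiation; this is the general Reinhardt-domain mechanism of Straube, which works even when the kernel is known only through its orthonormal-basis expansion. You instead exploit the closed form: the factor is $F(w_2\overline{\eta}_2)$ with $F(t)=t^{-1}(1-t)^{-2}$ holomorphic on $0<\abs{t}<1$, the map $w_2\mapsto w_2\overline{\eta}_2$ is holomorphic and $\eta_2\mapsto w_2\overline{\eta}_2$ is anti-holomorphic, so the (Wirtinger) chain rule gives $\partial_{w_2}^bF=\overline{\eta}_2^bF^{(b)}$ and $\partial_{\overline{\eta}_2}^bF=w_2^bF^{(b)}$, and eliminating $F^{(b)}$ (legitimate since $w_2\neq0$ on $\mathbb{D}^*$) yields \eqref{DwtoDeta}. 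Your version avoids all convergence bookkeeping and makes the underlying structural fact --- that the kernel depends on $w_2,\eta_2$ only through the product $w_2\overline{\eta}_2$ --- completely explicit; the paper's series version is the one that would survive in settings where no closed form is available. Both are complete proofs of the lemma.
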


\begin{proof}
The kernel in \eqref{DwtoDeta} is the weighted Bergman kernel associated to $\mathbb{D}^*$ with the weight $\abs{z}^2$, see \cite{C2}. It has the following expansion
\[
\frac{1}{w_2\overline{\eta}_2}\cdot\frac{1}{(1-w_2\overline{\eta}_2)^2}=\sum_{j=0}^{\infty}(j+1)(w_2\overline{\eta}_2)^{j-1}
\]
which converges uniformly on every compact subset $K\times K\subset\mathbb{D}^*\times\mathbb{D}^*$. Differentiate the series term by term, and we see that
\begin{align*}
w_2^b\cdot\frac{\partial^b}{\partial w_2^b}\Big(\frac{1}{w_2\overline{\eta}_2}\cdot\frac{1}{(1-w_2\overline{\eta}_2)^2}\Big)
&=\sum_{j=0}^{\infty}(j+1)w_2^b\cdot\frac{\partial^b}{\partial w_2^b}(w_2\overline{\eta}_2)^{j-1}\\
&=\sum_{j=0}^{\infty}(j+1)\overline{\eta}_2^b\cdot\frac{\partial^b}{\partial\overline{\eta}_2^b}(w_2\overline{\eta}_2)^{j-1}\\
&=\overline{\eta}_2^b\cdot\frac{\partial^b}{\partial\overline{\eta}_2^b}\Big(\frac{1}{w_2\overline{\eta}_2}\cdot\frac{1}{(1-w_2\overline{\eta}_2)^2}\Big).
\end{align*}
This completes the proof.
\end{proof}

\subsection{Integration by Parts}

Now we focus on the integration over $\mathbb{D}^*$ in \eqref{eq2}. We first define a ``tangential" operator.

\begin{definition}
Let $S_w=w\frac{\partial}{\partial w}$ be the \textit{complex normal differential operator} on a neighborhood of $\partial\mathbb{D}$. We define the \textit{tangential operator} by
\[
T_w=\Im(S_w)=\frac{1}{2i}\left(w\frac{\partial}{\partial w}-\overline{w}\frac{\partial}{\partial\overline{w}}\right).
\]
\end{definition}

\begin{remark}
Indeed, $T_w$ is well defined on a neighborhood of $\overline{\mathbb{D}}$. Moreover, for any disc $\mathbb{D}_{\rho}=\{\abs{w}<\rho\}$ of radius $\rho<1$ with defining function $r_{\rho}(w)=\abs{w}^2-\rho^2$, we have
\begin{equation}
\label{tangential}
T_w(r_{\rho})=0
\end{equation}
on $\partial\mathbb{D}_{\rho}$. That is, $T_w$ is tangential on $\partial\mathbb{D}_{\rho}$ for all $\rho<1$.
\end{remark}

In order to make use of integration by parts, we need the following lemma.

\begin{lemma}
Let $T_w$ be as above, for $b\in\mathbb{Z}^+\cup\{0\}$, we have
\begin{equation}
\label{DtoT}
T_w^b\equiv\sum_{j=0}^bc_j\overline{w}^j\frac{\partial^j}{\partial\overline{w}^j}\,\,\,\,\,\pmod{\frac{\partial}{\partial w}},
\end{equation}
where $c_j$'s are constants, $c_b\neq0$, and $T_w^b$ denotes the composition of $b$ copies of $T_w$.
\end{lemma}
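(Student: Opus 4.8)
The plan is to prove \eqref{DtoT} by induction on $b$, working in the associative ring of differential operators in $w,\overline{w}$ with smooth coefficients. First I fix the meaning of the congruence: writing every operator in the normal form $\sum_{m,n}a_{m,n}(w,\overline{w})\frac{\partial^{m+n}}{\partial w^m\partial\overline{w}^n}$, with all coefficients on the left and all derivatives on the right, I declare an operator to be $\equiv0\pmod{\frac{\partial}{\partial w}}$ precisely when every nonzero term has $m\ge1$. With this convention the base case is immediate, since $T_w^0=\mathrm{Id}=c_0\,\overline{w}^0\frac{\partial^0}{\partial\overline{w}^0}$ with $c_0=1\neq0$.

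For the inductive step I write $T_w^b=\sum_{j=0}^b c_j\,\overline{w}^j\frac{\partial^j}{\partial\overline{w}^j}+R_b$, where $R_b\equiv0\pmod{\frac{\partial}{\partial w}}$ and $c_b\neq0$, and then compute $T_w^{b+1}=T_w\circ T_w^b$. The argument splits into two observations. First, the space of operators $\equiv0\pmod{\frac{\partial}{\partial w}}$ is stable under left composition by $T_w$: since $T_w=\frac{1}{2i}\big(w\frac{\partial}{\partial w}-\overline{w}\frac{\partial}{\partial\overline{w}}\big)$, applying either $\frac{\partial}{\partial w}$ or $\frac{\partial}{\partial\overline{w}}$ to a normal-form term never lowers the order of its $w$-derivative, and left multiplication by $w$ or $\overline{w}$ leaves that order unchanged; hence $T_w\circ R_b\equiv0\pmod{\frac{\partial}{\partial w}}$ and $R_b$ may be discarded.

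Second, I compute the action of $T_w$ on a single pure antiholomorphic term. Because $\overline{w}^j$ is annihilated by $\frac{\partial}{\partial w}$, and $\frac{\partial}{\partial w}$ commutes with $\frac{\partial^j}{\partial\overline{w}^j}$, the normal part $\frac{1}{2i}w\frac{\partial}{\partial w}$ of $T_w$ pushes a $\frac{\partial}{\partial w}$ to the right and therefore contributes nothing modulo $\frac{\partial}{\partial w}$. The tangential part $-\frac{1}{2i}\overline{w}\frac{\partial}{\partial\overline{w}}$, via the product-rule identity $\frac{\partial}{\partial\overline{w}}\circ\overline{w}^j\frac{\partial^j}{\partial\overline{w}^j}=j\,\overline{w}^{j-1}\frac{\partial^j}{\partial\overline{w}^j}+\overline{w}^j\frac{\partial^{j+1}}{\partial\overline{w}^{j+1}}$, yields
\begin{equation*}
T_w\circ\Big(\overline{w}^j\frac{\partial^j}{\partial\overline{w}^j}\Big)\equiv-\frac{j}{2i}\,\overline{w}^j\frac{\partial^j}{\partial\overline{w}^j}-\frac{1}{2i}\,\overline{w}^{j+1}\frac{\partial^{j+1}}{\partial\overline{w}^{j+1}}\pmod{\tfrac{\partial}{\partial w}}.
\end{equation*}
Summing over $j$ against the coefficients $c_j$ expresses $T_w^{b+1}$ as a combination of the $\overline{w}^j\frac{\partial^j}{\partial\overline{w}^j}$ for $0\le j\le b+1$, which is exactly the claimed form; the new top coefficient is $c_{b+1}=-c_b/(2i)$, nonzero by the inductive hypothesis, closing the induction (in fact $c_b=(i/2)^b$).

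I expect the only delicate point to be the bookkeeping around the congruence, namely verifying that the class of operators divisible on the right by $\frac{\partial}{\partial w}$ is preserved under left composition by $T_w$; once the normal-form convention is fixed this reduces to the elementary fact that neither $\frac{\partial}{\partial w}$ nor $\frac{\partial}{\partial\overline{w}}$ decreases the order of a $w$-derivative. Everything else is a direct product-rule computation, and tracking the leading coefficient gives the required $c_b\neq0$, which is precisely what makes \eqref{DtoT} triangular and invertible, so that later the pure antiholomorphic derivatives $\overline{w}^b\frac{\partial^b}{\partial\overline{w}^b}$ can be rewritten as combinations of the tangential operators $T_w^j$ and integrated by parts without boundary terms.
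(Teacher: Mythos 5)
Your proposal is correct and follows essentially the same route as the paper: induction on $b$, with the inductive step resting on the identity $T_w\circ\bigl(\overline{w}^j\frac{\partial^j}{\partial\overline{w}^j}\bigr)\equiv-\frac{j}{2i}\overline{w}^j\frac{\partial^j}{\partial\overline{w}^j}-\frac{1}{2i}\overline{w}^{j+1}\frac{\partial^{j+1}}{\partial\overline{w}^{j+1}}\pmod{\frac{\partial}{\partial w}}$ and the recursion $c_{b+1}=-c_b/(2i)$. The only difference is presentational: you justify discarding the remainder via a normal-form argument, whereas the paper simply carries it as an explicit term $A\circ\frac{\partial}{\partial w}$ and observes that $T_w\circ A\circ\frac{\partial}{\partial w}$ is again of that form.
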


\begin{proof}
We prove \eqref{DtoT} by induction on $b$. The case $b=0$ is trivial. When $b=1$, it is easy to see that
\[
T_w\equiv-\frac{1}{2i}\overline{w}\frac{\partial}{\partial\overline{w}}\,\,\,\,\,\pmod{\frac{\partial}{\partial w}}.
\]

Suppose \eqref{DtoT} holds for some $b$, then we see that
\[
T_w^b=\sum_{j=0}^bc_j\overline{w}^j\frac{\partial^j}{\partial\overline{w}^j}+A\circ\frac{\partial}{\partial w},
\]
for some operator $A$. So for the case $b+1$, we have
\begin{align*}
T_w\circ T_w^b
&=\frac{1}{2i}\left(w\frac{\partial}{\partial w}-\overline{w}\frac{\partial}{\partial\overline{w}}\right)\circ\bigg(\sum_{j=0}^bc_j\overline{w}^j\frac{\partial^j}{\partial\overline{w}^j}+A\circ\frac{\partial}{\partial w}\bigg)\\
&=\frac{1}{2i}\bigg(\sum_{j=0}^bc_jw\overline{w}^j\frac{\partial^j}{\partial\overline{w}^j}\frac{\partial}{\partial w}-jc_j\overline{w}^j\frac{\partial^j}{\partial\overline{w}^j}-c_j\overline{w}^{j+1}\frac{\partial^{j+1}}{\partial\overline{w}^{j+1}}\bigg)+T_w\circ A\circ\frac{\partial}{\partial w}\\
&=\sum_{j=0}^{b+1}c'_j\overline{w}^j\frac{\partial^j}{\partial\overline{w}^j}+A'\circ\frac{\partial}{\partial w},
\end{align*}
for some constants $c'_j$'s with $c'_{b+1}=-\frac{1}{2i}c_b\neq0$ and some operator $A'$. Therefore, \eqref{DtoT} holds for $T_w^{b+1}$.
\end{proof}

Combine \eqref{DwtoDeta} and \eqref{DtoT}, since the kernel in \eqref{DwtoDeta} is anti-holomorphic in $\eta_2$, the inside integration over $\mathbb{D}^*$ w.r.t. variable $\eta_2$ in \eqref{eq2} denoted by $I$ becomes

\begin{align*}
I
&=\int_{\mathbb{D}^*}\frac{\partial^b}{\partial w_2^b}\bigg(\frac{1}{w_2\overline{\eta}_2}\cdot\frac{1}{(1-w_2\overline{\eta}_2)^2}\bigg)f(\Psi(\eta))\abs{\eta_2}^2\,d\eta_2\\
&=\int_{\mathbb{D}^*}\frac{\overline{\eta}_2^b}{w_2^b}\cdot\frac{\partial^b}{\partial\overline{\eta}_2^b}\bigg(\frac{1}{w_2\overline{\eta}_2}\cdot\frac{1}{(1-w_2\overline{\eta}_2)^2}\bigg)f(\Psi(\eta))\abs{\eta_2}^2\,d\eta_2\\
&=\frac{1}{w_2^b}\int_{\mathbb{D}^*}\sum_{j=0}^bc_jT_{\eta_2}^j\bigg(\frac{1}{w_2\overline{\eta}_2}\cdot\frac{1}{(1-w_2\overline{\eta}_2)^2}\bigg)f(\Psi(\eta))\abs{\eta_2}^2\,d\eta_2\\
&=\frac{1}{w_2^b}\sum_{j=0}^bc_j\lim_{\epsilon\to0^+}\int_{\mathbb{D}-\mathbb{D}_{\epsilon}}T_{\eta_2}^j\bigg(\frac{1}{w_2\overline{\eta}_2}\cdot\frac{1}{(1-w_2\overline{\eta}_2)^2}\bigg)f(\Psi(\eta))\abs{\eta_2}^2\,d\eta_2.
\end{align*}

Let us assume in addition for a moment that $f(\Psi(\eta))$ belongs to $C^{\infty}(\overline{\mathbb{D}}-\{0\})$ in variable $\eta_2$. Then by \eqref{tangential} we obtain

\begin{equation}
\label{intbypart}
\begin{split}
I
&=\frac{1}{w_2^b}\sum_{j=0}^bc_j\lim_{\epsilon\to0^+}\int_{\mathbb{D}-\mathbb{D}_{\epsilon}}T_{\eta_2}^j\bigg(\frac{1}{w_2\overline{\eta}_2}\cdot\frac{1}{(1-w_2\overline{\eta}_2)^2}\bigg)f(\Psi(\eta))\abs{\eta_2}^2\,d\eta_2\\
&=\frac{1}{w_2^b}\sum_{j=0}^bc_j(-1)^j\lim_{\epsilon\to0^+}\int_{\mathbb{D}-\mathbb{D}_{\epsilon}}\frac{1}{w_2\overline{\eta}_2}\cdot\frac{1}{(1-w_2\overline{\eta}_2)^2}T_{\eta_2}^j\bigg(f(\Psi(\eta))\abs{\eta_2}^2\bigg)\,d\eta_2\\
&=\frac{1}{w_2^b}\sum_{j=0}^b(-1)^jc_j\int_{\mathbb{D}^*}\frac{1}{w_2\overline{\eta}_2}\cdot\frac{1}{(1-w_2\overline{\eta}_2)^2}T_{\eta_2}^j\Big(f(\Psi(\eta))\Big)\abs{\eta_2}^2\,d\eta_2,
\end{split}
\end{equation}
where the last line follows from the fact that $T_{\eta_2}(\abs{\eta_2}^2)=0$.

\begin{definition}
\label{operators}
We use the following notation
\[
F_j(\eta)=T_{\eta_2}^j\Big(f(\Psi(\eta))\Big)\cdot\eta_2,
\]
\[
\mathcal{B}_{1,a}(g)(w_1)=\int_{\mathbb{D}}\frac{\partial^a}{\partial w_1^a}\bigg(\frac{1}{(1-w_1\overline{\eta}_1)^2}\bigg)g(\eta_1)\,d\eta_1,
\]
for any $g$ whenever the integral is well defined, and
\[
\mathcal{B}_2(h)(w_2)=\int_{\mathbb{D}^*}\frac{h(\eta_2)}{(1-w_2\overline{\eta}_2)^2}\,d\eta_2,
\]
for any $h$ whenever the integral is well defined.
\end{definition}

By \eqref{intbypart} and the notation above (Definition \ref{operators}), we see that \eqref{eq2} becomes

\begin{equation}
\label{eq3}
\sum_{\abs{\beta}\le k}\int_{\mathbb{D}\times\mathbb{D}^*}\bigg|\sum_{a+b\le\abs{\beta}}\frac{p_{a,b,\beta}(w_1)}{w_2^{\abs{\beta}+1}}\sum_{j=0}^b(-1)^jc_j\mathcal{B}_{1,a}\big(\mathcal{B}_2(F_j)\big)(w)\bigg|^p\abs{w_2}^{kp+2}\,dw.
\end{equation}

\section{Proof of the Main Theorem}
\label{sec4}

\subsection{The $L^p$ Boundedness}
To finish the proof, we first need two lemmas.

\begin{lemma}
\label{B1}
The operator $\mathcal{B}_{1,a}$ defined as in Definition \ref{operators} is bounded from $W^{a,p}(\mathbb{D})$ to $L^p(\mathbb{D})$ for $p\in(1,\infty)$.
\end{lemma}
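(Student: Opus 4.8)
The plan is to reduce $\mathcal{B}_{1,a}$ to the ordinary Bergman projection on $\mathbb{D}$ together with $a$ tangential derivatives landing on $g$, and then to remove the resulting factor $w_1^{-a}$ by an interior estimate for holomorphic functions.

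First I would observe that, since $\frac{1}{(1-w_1\overline\eta_1)^2}$ is a constant multiple of the Bergman kernel of $\mathbb{D}$, differentiation under the integral sign gives $\mathcal{B}_{1,a}(g)=\partial_{w_1}^a\mathcal{B}_{\mathbb{D}}(g)$, where $\mathcal{B}_{\mathbb{D}}$ is the Bergman projection on the disc. Next, exactly as in the proof of \eqref{DwtoDeta}, the expansion $\frac{1}{(1-w_1\overline\eta_1)^2}=\sum_{n\ge0}(n+1)(w_1\overline\eta_1)^n$ yields the kernel identity $\partial_{w_1}^a\big(\frac{1}{(1-w_1\overline\eta_1)^2}\big)=\frac{\overline\eta_1^{\,a}}{w_1^{a}}\,\partial_{\overline\eta_1}^a\big(\frac{1}{(1-w_1\overline\eta_1)^2}\big)$. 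Because the kernel is anti-holomorphic in $\eta_1$, the $\partial_{\eta_1}$–remainder in \eqref{DtoT} annihilates it, so inverting the triangular relation \eqref{DtoT} lets me write $\overline\eta_1^{\,a}\partial_{\overline\eta_1}^a$ applied to the kernel as $\sum_{j=0}^a d_j T_{\eta_1}^j$ applied to the kernel, for suitable constants $d_j$.

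Then I would assume first that $g\in C^\infty(\overline{\mathbb{D}})$ and integrate by parts. The key point is that $T_{\eta_1}=\Im(S_{\eta_1})$ is, up to a constant, the angular derivative $\partial_\theta$; it is tangential to every circle by \eqref{tangential} and, being periodic in $\theta$, produces no boundary terms on $\mathbb{D}$, so $\int_{\mathbb{D}}(T_{\eta_1}u)v\,d\eta_1=-\int_{\mathbb{D}}u\,(T_{\eta_1}v)\,d\eta_1$. Transferring the $T_{\eta_1}^j$ onto $g$ gives the representation $\mathcal{B}_{1,a}(g)(w_1)=\frac{1}{w_1^a}\sum_{j=0}^a(-1)^j d_j\,\mathcal{B}_{\mathbb{D}}(T_{\eta_1}^j g)(w_1)$. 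Since $\mathcal{B}_{\mathbb{D}}$ is $L^p$-bounded for all $p\in(1,\infty)$ and $T_{\eta_1}^j$ is a differential operator of order $j\le a$ with coefficients bounded on $\overline{\mathbb{D}}$, the holomorphic function $G:=\sum_{j}(-1)^jd_j\,\mathcal{B}_{\mathbb{D}}(T_{\eta_1}^j g)$ satisfies $\norm{G}_{L^p(\mathbb{D})}\le C\norm{g}_{W^{a,p}(\mathbb{D})}$, and a density argument removes the smoothness assumption on $g$.

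It remains to bound $\norm{G/w_1^a}_{L^p(\mathbb{D})}$ by $\norm{G}_{L^p(\mathbb{D})}$, which I expect to be the main obstacle since division by $w_1^a$ a priori magnifies the norm near the origin. I would use that $\mathcal{B}_{1,a}(g)=G/w_1^a$ is holomorphic on $\mathbb{D}$, forcing $G$ to vanish to order at least $a$ at the origin, so $H:=G/w_1^a$ is holomorphic. On $\{\abs{w_1}\ge1/2\}$ one has $\abs{H}\le 2^a\abs{G}$ directly. On $\{\abs{w_1}<1/2\}$ I would invoke the interior estimate $\sup_{\abs{w_1}\le1/2}\abs{H}\le C\norm{H}_{L^p(\{1/2<\abs{w_1}<3/4\})}$, coming from the sub-mean-value property of $\abs{H}^p$, and on that annulus again $\abs{H}\le 2^a\abs{G}$; combining the two regions yields $\norm{H}_{L^p(\mathbb{D})}\le C\norm{G}_{L^p(\mathbb{D})}$ and completes the proof.
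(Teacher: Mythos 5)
Your argument is correct, but it is a genuinely different (and much longer) route than the paper's. The paper's entire proof is the observation that $\mathcal{B}_{1,a}(g)$ is a constant multiple of $\partial_{w_1}^a\mathcal{B}_{\mathbb{D}}(g)$, where $\mathcal{B}_{\mathbb{D}}$ is the Bergman projection of the disc, so the lemma follows at once from the classical fact that $\mathcal{B}_{\mathbb{D}}$ is bounded on $W^{k,p}(\mathbb{D})$ for all $p\in(1,\infty)$; no conversion to the anti-holomorphic variable is performed in $w_1$ at all. What you have done is re-prove that classical Sobolev estimate from scratch by running the Straube-type machinery (the analogues of \eqref{DwtoDeta}, \eqref{DtoT}, and \eqref{tangential}) in the $\eta_1$ variable, which the paper only deploys in the $\eta_2$ variable where it is genuinely needed because of the puncture and the weight. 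The price of your route is the extra factor $w_1^{-a}$, which you then must remove; your treatment of it is sound — $G=w_1^a\mathcal{B}_{1,a}(g)$ is holomorphic and $\mathcal{B}_{1,a}(g)=\partial_{w_1}^a\mathcal{B}_{\mathbb{D}}(g)$ is holomorphic on all of $\mathbb{D}$, so $G$ vanishes to order $a$ at the origin and the quotient is controlled by the maximum principle together with the sub-mean-value inequality on a slightly shrunken disc (strictly speaking you need both, not just the sub-mean-value property on the annulus $\{1/2<\abs{w_1}<3/4\}$, since a point with $\abs{w_1}\le 1/2$ has no ball inside that annulus; but this is cosmetic). The benefit of your approach is that it is self-contained and makes the lemma independent of the quoted regularity theory for $\mathcal{B}_{\mathbb{D}}$; the benefit of the paper's is brevity and the fact that it isolates the only place where the tangential integration by parts is actually forced, namely the punctured-disc factor.
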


\begin{proof}
This follows from the well-known result that the Bergman projection on $\mathbb{D}$ is bounded from $W^{k,p}(\mathbb{D})$ to itself for $p\in(1,\infty)$ and all $k\in\mathbb{Z}^+\cup\{0\}$. 
\end{proof}

\begin{lemma}
\label{B2}
The integral operator $\mathcal{B}_2$ defined as in Definition \ref{operators} is bounded from $L^p\big(\mathbb{D}^*,\abs{w}^{2-p}\big)$ to itself for $p\in(4/3,4)$, where $L^p\big(\mathbb{D}^*,\abs{w}^{2-p}\big)$ is the weighted $L^p$ space with $w\in\mathbb{D}^*$.
\end{lemma}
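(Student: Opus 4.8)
The plan is to recognize $\mathcal{B}_2$ as (a constant multiple of) a Bergman-type projection and to reduce its weighted boundedness to the weighted $L^p$ estimate established in \cite{C2}. First, since the puncture $\{0\}$ has measure zero, integration over $\mathbb{D}^*$ against $d\eta_2$ agrees with integration over $\mathbb{D}$, so $\mathcal{B}_2$ is, up to the constant $\pi$, the ordinary Bergman projection on $\mathbb{D}$ with kernel $\tfrac{1}{\pi(1-w_2\overline{\eta}_2)^2}$. Thus Lemma \ref{B2} asserts that this projection is bounded on $L^p(\mathbb{D}^*,\abs{w}^{2-p})$ for $p\in(4/3,4)$.

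Second, I would connect $\mathcal{B}_2$ to the weighted kernel $\tfrac{1}{w_2\overline{\eta}_2}\cdot\tfrac{1}{(1-w_2\overline{\eta}_2)^2}$ appearing in \eqref{DwtoDeta}, which by \cite{C2} is the Bergman kernel of $\mathbb{D}^*$ for the weight $\abs{z}^2$. Writing $P^{(2)}$ for the associated weighted projection $P^{(2)}(h)(w)=\int_{\mathbb{D}^*}\tfrac{1}{w\overline{\eta}}\cdot\tfrac{1}{(1-w\overline{\eta})^2}\,h(\eta)\abs{\eta}^2\,d\eta$, a direct computation (using $\abs{\eta}^2/\overline{\eta}=\eta$) yields the conjugation identity $\mathcal{B}_2(g)(w)=w\,P^{(2)}(g/\eta)(w)$. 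The substitution $g\mapsto g/\eta$ is an isometry from $L^p(\mathbb{D}^*,\abs{w}^{2-p})$ onto $L^p(\mathbb{D}^*,\abs{w}^{2})$, and multiplication by $w$ is an isometry from $L^p(\mathbb{D}^*,\abs{w}^{2})$ into $L^p(\mathbb{D}^*,\abs{w}^{2-p})$; hence $\mathcal{B}_2$ is bounded on $L^p(\mathbb{D}^*,\abs{w}^{2-p})$ if and only if $P^{(2)}$ is bounded on $L^p(\mathbb{D}^*,\abs{w}^{2})$. This last statement is precisely the weighted $L^p$ estimate proved in \cite{C2}, valid exactly for $p\in(4/3,4)$, which closes the argument.

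As an independent check (and an alternative route avoiding the conjugation), one can verify directly that the power weight $\abs{w}^{2-p}$ satisfies the Bekoll\'e--Bonami $B_p$ condition on $\mathbb{D}$. Because the weight is a pure power, it is comparable to a positive constant near $\partial\mathbb{D}$, where the kernel singularity lives, so the boundary tents impose no constraint; the only genuine conditions come from local integrability of the weight and of its dual weight $\abs{w}^{-(2-p)/(p-1)}$ near the puncture $w=0$, namely $2-p>-2$ and $2-p<2(p-1)$. These rearrange precisely to $p\in(4/3,4)$, matching the claimed range.

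The main obstacle is not the algebra but the sharpness: I expect the bookkeeping of the powers of $\abs{w}$ in the weight transformation to be routine, while the genuinely delicate point is that the two local integrability conditions at $w=0$ are not merely necessary but \emph{sufficient} for $L^p$-boundedness. This is exactly where the sharp weighted estimate of \cite{C2} (equivalently, the sufficiency direction of the Bekoll\'e--Bonami theorem for the power weight) does the real work, and I would lean on that result rather than reprove the endpoint analysis.
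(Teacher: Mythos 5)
Your argument is correct and follows essentially the same route as the paper: the paper's proof consists precisely of the observation that boundedness of $\mathcal{B}_2$ on $L^p(\mathbb{D}^*,\abs{w}^{2-p})$ is equivalent to boundedness of the weighted Bergman projection of $\mathbb{D}^*$ with weight $\abs{w}^2$ on $L^p(\mathbb{D}^*,\abs{w}^2)$, which is then quoted from \cite{C2}. Your conjugation identity $\mathcal{B}_2(g)(w)=w\,P^{(2)}(g/\eta)(w)$ and the two isometries simply make explicit the equivalence that the paper leaves implicit, and the Bekoll\'e--Bonami remark is a consistent but optional cross-check.
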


\begin{proof}
This is equivalent to the statement that the weighted Bergman projection associated to $\mathbb{D}^*$ with the weight $\abs{w}^2$ is bounded from $L^p\big(\mathbb{D}^*,\abs{w}^{2}\big)$ to itself for $p\in(4/3,4)$. For a proof, see \cite{C2}.
\end{proof}

\subsection{The Proof under the Additional Assumption}

With Lemma \ref{B1} and Lemma \ref{B2}, we can prove Theorem \ref{main} under the additional assumption $f(\Psi(\eta))\in C^{\infty}(\overline{\mathbb{D}}-\{0\})$ in variable $\eta_2$.

\begin{proof}[Proof of Theorem \ref{main} under additional assumption]
\

By \eqref{eq1}, \eqref{eq2}, \eqref{eq3} and Lemma \ref{lem}, we obtain

\begin{align*}
\norm{\mathcal{B}(f)}_{k,p,kp}^p
&\le\sum_{\abs{\beta}\le k}\sum_{a+b\le\abs{\beta}}\sum_{j=0}^bC_{k,p}\int_{\mathbb{D}\times\mathbb{D}^*}\abs{\mathcal{B}_{1,a}(\mathcal{B}_2(F_j))(w)}^p\abs{w_2}^{kp+2-p(\abs{\beta}+1)}\,dw\\
&\le C_{k,p}\sum_{a+b\le k}\int_{\mathbb{D}\times\mathbb{D}^*}\abs{\mathcal{B}_{1,a}(\mathcal{B}_2(F_b))(w)}^p\abs{w_2}^{2-p}\,dw.
\end{align*}
By Lemma \ref{B1}, for $p\in(1,\infty)$ we have

\begin{align*}
\norm{\mathcal{B}(f)}_{k,p,kp}^p
&\le C_{k,p}\sum_{a+b\le k}\int_{\mathbb{D}^*}\bigg(\int_{\mathbb{D}}\sum_{\abs{\beta}\le a}\abs{D^{\beta}_{w_1,\overline{w}_1}(\mathcal{B}_2(F_b))(w)}^p\,dw_1\bigg)\abs{w_2}^{2-p}\,dw_2\\
&\le C_{k,p}\sum_{\abs{\beta}+b\le k}\int_{\mathbb{D}}\bigg(\int_{\mathbb{D}^*}\abs{\mathcal{B}_2(D^{\beta}_{w_1,\overline{w}_1}(F_b))(w)}^p\abs{w_2}^{2-p}\,dw_2\bigg)\,dw_1.
\end{align*}
Similarly, by Lemma \ref{B2}, for $p\in(4/3,4)$ we have

\begin{equation}
\label{eq4}
\begin{split}
\norm{\mathcal{B}(f)}_{k,p,kp}^p
&\le C_{k,p}\sum_{\abs{\beta}+b\le k}\int_{\mathbb{D}}\bigg(\int_{\mathbb{D}^*}\abs{D^{\beta}_{w_1,\overline{w}_1}(F_b)(w)}^p\abs{w_2}^{2-p}\,dw_2\bigg)\,dw_1\\
&=C_{k,p}\sum_{\abs{\beta}+b\le k}\int_{\mathbb{D}\times\mathbb{D}^*}\abs{D^{\beta}_{w_1,\overline{w}_1}T_{w_2}^b\Big(f(\Psi(w))\Big)\cdot w_2}^p\abs{w_2}^{2-p}\,dw\\
&=C_{k,p}\sum_{\abs{\beta}+b\le k}\int_{\mathbb{D}\times\mathbb{D}^*}\abs{D^{\beta}_{w_1,\overline{w}_1}T_{w_2}^b\Big(f(\Psi(w))\Big)}^p\abs{w_2}^{2}\,dw\\
&\le C_{k,p}\sum_{\abs{\beta}+\abs{\beta'}\le k}\int_{\mathbb{D}\times\mathbb{D}^*}\abs{D^{\beta}_{w_1,\overline{w}_1}D^{\beta'}_{w_2,\overline{w}_2}\Big(f(\Psi(w))\Big)}^p\abs{w_2}^{2}\,dw,
\end{split}
\end{equation}
where the last line follows from $T_{w_2}=\frac{1}{2i}\left(w_2\frac{\partial}{\partial w_2}-\overline{w}_2\frac{\partial}{\partial\overline{w}_2}\right)$, $\abs{w_2}<1$ for $w_2\in\mathbb{D}^*$, and a similar equation as \eqref{DtoT}.

By the biholomorphism $\Psi(w)=z$ defined in section \ref{sec2}, we have
\[
\frac{\partial}{\partial w_1}=w_2\frac{\partial}{\partial z_1}\,\,\,\,\,\,\,\,\,\text{and}\,\,\,\,\,\,\,\,\,\frac{\partial}{\partial\overline{w}_1}=\overline{w}_2\frac{\partial}{\partial\overline{z}_1},
\]
and also
\[
\frac{\partial}{\partial w_2}=w_1\frac{\partial}{\partial z_1}+\frac{\partial}{\partial z_2}\,\,\,\,\,\,\,\,\,\text{and}\,\,\,\,\,\,\,\,\,\frac{\partial}{\partial\overline{w}_2}=\overline{w}_1\frac{\partial}{\partial\overline{z}_1}+\frac{\partial}{\partial\overline{z}_2}.
\]
Again, since $(w_1,w_2)\in\mathbb{D}\times\mathbb{D}^*$, we have $\abs{w_1},\abs{w_2}<1$. Therefore, by \eqref{eq4} and transferring $\mathbb{D}\times\mathbb{D}^*$ back to $\mathbb{H}$, we finally arrive at
\[
\norm{\mathcal{B}(f)}_{k,p,kp}^p\le C_{k,p}\sum_{\abs{\alpha}\le k}\int_{\mathbb{H}}\abs{D^{\alpha}_{z,\overline{z}}(f)(z)}^p\,dz
\]
as desired.
\end{proof}

\subsection{Remove the Additional Assumption}

To remove the additional assumption $f(\Psi(\eta))\in C^{\infty}(\overline{\mathbb{D}}-\{0\})$ in variable $\eta_2$, we need the following lemma.

\begin{lemma}
\label{smooth}
The subspace $C^{\infty}(\overline{\mathbb{D}}-\{0\})\bigcap W^{k,p}(\mathbb{D}^*,\abs{w}^2)$ is dense in $W^{k,p}(\mathbb{D}^*,\abs{w}^2)$ w.r.t. the weighted norm in $W^{k,p}(\mathbb{D}^*,\abs{w}^2)$.
\end{lemma}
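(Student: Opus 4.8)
The plan is to treat the two ends of $\mathbb{D}^*$ separately: the outer circle $\{\abs{w}=1\}$, where genuine smoothness up to the boundary must be produced, and the puncture at the origin, where the weight $\abs{w}^2$ degenerates. The key observation is that a function in $C^{\infty}(\overline{\mathbb{D}}-\{0\})$ is allowed to be singular at the origin; hence I do \emph{not} need to excise the puncture, and the only tasks are to gain smoothness up to $\{\abs{w}=1\}$ and to smooth $f$ in the interior of the punctured disc. Since the degeneracy of $\abs{w}^2$ occurs exactly at the one point that is both excluded from the domain and exempt from the smoothness requirement, it will not obstruct either task.

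First I would handle the outer boundary by dilation. For $\lambda>1$ set $f_\lambda(w)=f(w/\lambda)$, which is defined on the enlarged punctured disc $\Omega_\lambda=\{0<\abs{w}<\lambda\}\supset\overline{\mathbb{D}}-\{0\}$. A change of variables $u=w/\lambda$ shows that $f_\lambda\in W^{k,p}(\Omega_\lambda,\abs{w}^2)$ with norm comparable to $\norm{f}_{k,p,2}$, uniformly for $\lambda$ near $1$; the same computation supplies the uniform bound $\norm{(g)_\lambda}_{L^p(\mathbb{D}^*,\abs{w}^2)}\le C\norm{g}_{L^p(\mathbb{D}^*,\abs{w}^2)}$ needed to run a standard $3\epsilon$ argument. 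Combining this bound with the density of $C_c(\mathbb{D}^*)$ in $L^p(\mathbb{D}^*,\abs{w}^2)$ and the uniform continuity of compactly supported functions, one obtains the strong continuity of dilations, namely $\norm{D^\alpha f_\lambda-D^\alpha f}_{L^p(\mathbb{D}^*,\abs{w}^2)}\to0$ as $\lambda\to1^+$ for every $\abs{\alpha}\le k$ (using $D^\alpha f_\lambda(w)=\lambda^{-\abs{\alpha}}(D^\alpha f)(w/\lambda)$). Note that the dilation fixes the origin, so no new singularity is created and the weight transforms covariantly.

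Next I would smooth $f_\lambda$ in the interior by the Meyers--Serrin (``$H=W$'') argument applied on the open set $\Omega_\lambda$. On any compact subset of $\Omega_\lambda$ the weight $\abs{w}^2$ is smooth and bounded above and below, so the weighted and unweighted Sobolev norms are locally equivalent; consequently local mollification converges in the weighted norm, and the usual partition-of-unity summation (whose pieces shrink automatically as one approaches the origin) produces $g\in C^\infty(\Omega_\lambda)\cap W^{k,p}(\Omega_\lambda,\abs{w}^2)$ with $\norm{g-f_\lambda}_{W^{k,p}(\Omega_\lambda,\abs{w}^2)}$ as small as desired. Because the circle $\{\abs{w}=1\}$ lies in the interior of $\Omega_\lambda$, the restriction $g|_{\overline{\mathbb{D}}-\{0\}}$ is smooth up to $\{\abs{w}=1\}$ and on the punctured disc, that is $g\in C^{\infty}(\overline{\mathbb{D}}-\{0\})$; and restriction only decreases the weighted norm, $\norm{g-f_\lambda}_{W^{k,p}(\mathbb{D}^*,\abs{w}^2)}\le\norm{g-f_\lambda}_{W^{k,p}(\Omega_\lambda,\abs{w}^2)}$. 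A triangle inequality with the dilation estimate then makes $\norm{g-f}_{W^{k,p}(\mathbb{D}^*,\abs{w}^2)}$ arbitrarily small, completing the proof.

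I expect the main obstacle to be the bookkeeping around the degenerate weight rather than a genuine difficulty: one must verify carefully that dilations act strongly continuously on the \emph{weighted} space---this is where the vanishing of $\abs{w}^2$ at the fixed point of the dilation could a priori cause trouble, but it is tamed by the covariance of the weight under scaling---and that the Meyers--Serrin mollification converges in the weighted norm, which reduces to the local two-sided boundedness of $\abs{w}^2$ away from the origin. Neither step requires $\abs{w}^2$ to be globally Muckenhoupt (it fails $A_p$ for $p\le2$), precisely because all smoothing is carried out in the interior of $\mathbb{D}^*$ and up to the non-degenerate outer boundary, while the degeneracy at the origin is never smoothed away.
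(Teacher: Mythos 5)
Your argument is correct. The treatment of the puncture coincides with the paper's: both exploit that membership in $C^{\infty}(\overline{\mathbb{D}}-\{0\})$ imposes nothing at the origin, so a Meyers--Serrin mollification with a locally finite partition of unity whose pieces shrink toward $0$ suffices there, the weight $\abs{w}^2$ being bounded above and below on the support of each piece. Where you genuinely diverge is at the outer circle. The paper localizes to the annulus $V_0=\mathbb{D}-\overline{\mathbb{D}_{1/2}}$, where the weighted and unweighted norms are equivalent, invokes the boundary approximation theorem of Evans (\S 5.3, Theorem 3) to produce $g_0\in C^{\infty}(\overline{V_0})$, and then glues $g_0$ to the interior approximant with a two-element partition of unity subordinate to $\{V'_0,\mathbb{D}_{\rho}\}$. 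You instead dilate globally, $f_{\lambda}(w)=f(w/\lambda)$, using that contractions fixing the origin map $\mathbb{D}^*$ into itself and that the weight $\abs{w}^2$ is homogeneous, so dilations are uniformly bounded and (by your $3\epsilon$ argument through $C_c(\mathbb{D}^*)$, which is valid since $\abs{w}^2\,dw$ is a Radon measure) strongly continuous on $W^{k,p}(\mathbb{D}^*,\abs{w}^2)$; a single Meyers--Serrin pass on the enlarged punctured disc then yields an approximant smooth across $\partial\mathbb{D}$, and restriction to $\mathbb{D}^*$ only shrinks the norm. Your route buys a cleaner conclusion --- no final patching and no appeal to the $C^1$-boundary approximation theorem --- at the price of verifying strong continuity of dilations on the weighted space; it leans on the dilation symmetry of the disc and the homogeneity of the weight, whereas the paper's localization would adapt unchanged to a less symmetric outer boundary or a non-homogeneous weight.
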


\begin{proof}
The argument is based on \cite[\S5.3 Theorem 2 and Theorem 3]{E}.

Given any $g\in W^{k,p}(\mathbb{D}^*,\abs{w}^2)$, fix $\varepsilon>0$. On $V_0=\mathbb{D}-\overline{\mathbb{D}_{\frac{1}{2}}}$, the weighted norm $W^{k,p}(V_0,\abs{w}^2)$ is equivalent to the unweighted norm $W^{k,p}(V_0)$. Arguing as in the proof of \cite[\S5.3 Theorem 3]{E}, we see that there is a $g_0\in C^{\infty}(\overline{V_0})$, so that
\[
\norm{g_0-g}_{W^{k,p}(V_0,\abs{w}^2)}<\varepsilon.
\]
Define $U_j=\mathbb{D}_{\rho-\frac{1}{j}}-\overline{\mathbb{D}_{\frac{1}{j}}}$ for some $1>\rho>\frac{1}{2}$ and for $j\in\mathbb{Z}^+$ ($U_1=\varnothing$). Let $V_j=U_{j+3}-\overline{U_{j+1}}$, then we see $\bigcup_{j=1}^{\infty}V_j=\mathbb{D}_{\rho}-\{0\}$. Arguing as in the proof of \cite[\S 5.3 Theorem 2]{E}, we can find a smooth partition of unity $\{\psi_j\}_{j=1}^{\infty}$ subordinate to $\{V_j\}_{j=1}^{\infty}$, so that $\sum_{j=1}^{\infty}\psi_j=1$ on $\mathbb{D}_{\rho}-\{0\}$. Moreover, for each $j$, the support of $\psi_jg$ lies in $V_j$ (so $\abs{w}>\frac{1}{j+3}$), and hence $\psi_jg\in W^{k,p}(\mathbb{D}_{\rho}-\{0\})$. Therefore, we can find smooth function $g_j$ with support in $U_{j+4}-\overline{U_j}$, so that
\[
\norm{g_j-\psi_jg}_{W^{k,p}(\mathbb{D}_{\rho}-\{0\})}\le\frac{\varepsilon}{2^j},
\]
see \cite[\S 5.3 Theorem 2]{E} for details. Write $\tilde{g}_0=\sum_{j=1}^{\infty}g_j$, it is easy to see that $\tilde{g}_0\in C^{\infty}(\mathbb{D}_{\rho}-\{0\})$ and
\[
\norm{\tilde{g}_0-g}_{W^{k,p}(\mathbb{D}_{\rho}-\{0\},\abs{w}^2)}\le\norm{\tilde{g}_0-g}_{W^{k,p}(\mathbb{D}_{\rho}-\{0\})}\le\varepsilon,
\]
since $\abs{w}<1$ on $\mathbb{D}_{\rho}-\{0\}$.

Let $V'_0$ be an open set so that $\partial\mathbb{D}\subset V'_0$ and $V'_0\bigcap\mathbb{D}=V_0$, then $V'_0\bigcup\mathbb{D}_{\rho}$ cover $\overline{\mathbb{D}}$. Take a smooth partition of unity $\{\tilde{\psi}_1,\tilde{\psi}_2\}$ on $\overline{\mathbb{D}}$ subordinate to $\{V'_0,\mathbb{D}_{\rho}\}$, then $h=\tilde{\psi}_1g_0+\tilde{\psi}_2\tilde{g}_0$ belongs to $C^{\infty}(\overline{\mathbb{D}}-\{0\})$, and
\begin{align*}
\norm{h-g}_{W^{k,p}(\mathbb{D}^*,\abs{w}^2)}
&\le C\big(\norm{g_0-g}_{W^{k,p}(V_0,\abs{w}^2)}+\norm{\tilde{g}_0-g}_{W^{k,p}(\mathbb{D}_{\rho}-\{0\},\abs{w}^2)}\big)\\
&<2C\varepsilon
\end{align*}
as desired.
\end{proof}

Now we are ready to remove the extra assumption and prove our main result.

\begin{proof}[Proof of Theorem \ref{main}]
\

For any $f\in W^{k,p}(\mathbb{H})$, we have $f(\Psi(w))\in W^{k,p}(\mathbb{D}^*,\abs{w_2}^2)$ in variable $w_2$. Then by Lemma \ref{smooth}, we can find a sequence $\{h_{j}(w)\}\subset C^{\infty}(\overline{\mathbb{D}}-\{0\})$ tending to $f(\Psi(w))$ in variable $w_2$ w.r.t. the norm in $ W^{k,p}(\mathbb{D}^*,\abs{w_2}^2)$. We have already seen that \eqref{eq4} holds for each $h_j(w)$ replacing $f(\Psi(w))$. Indeed, if we focus on the integration over $\mathbb{D}^*$, by comparing with \eqref{eq2}, we see that \eqref{eq4} is just the following: for each $b=0,1,\dots,k$
\begin{equation}
\label{B3}
\int_{\mathbb{D}^*}\abs{w_2^b\frac{\partial^b}{\partial w_2^b}(\mathcal{B}_3(h_j))}^p\abs{w_2}^2\,dw_2\le C_{k,p}\norm{h_j}_{W^{k,p}(\mathbb{D}^*,\abs{w_2}^2)},
\end{equation}
where $\mathcal{B}_3$ is the weighted Bergman projection associated to $\mathbb{D}^*$ with the weight $\abs{w_2}^2$.

Now letting $j\to\infty$, in view of the boundedness of $\mathcal{B}_3$ (Lemma \ref{B2}), we see that $w_2^b\frac{\partial^b}{\partial w_2^b}(\mathcal{B}_3(h_j))$ indeed tends to $w_2^b\frac{\partial^b}{\partial w_2^b}(\mathcal{B}_3(f(\Psi)))$ in $ L^p(\mathbb{D}^*,\abs{w_2}^2)$ for each $b=0,1,\dots,k$. Therefore, \eqref{B3} is valid for general $f(\Psi(w))\in W^{k,p}(\mathbb{D}^*,\abs{w_2}^2)$, which completes the proof for any general $f\in W^{k,p}(\mathbb{H})$.
\end{proof}

\begin{remark}
The method also applies to the $n$-dimensional generalization of the Hartogs triangle, see \cite{C1}. To be precise, for $j=1,\dots,l$, let $\Omega_j$ be a bounded smooth domain in $\mathbb{C}^{m_j}$ with a biholomorphic mapping $\phi_j:\Omega_j\to\mathbb{B}^{m_j}$ between $\Omega_j$ and the unit ball $\mathbb{B}^{m_j}$ in $\mathbb{C}^{m_j}$. We use the notation $\tilde{z_j}$ to denote the $j$th $m_j$-tuple in $z\in\mathbb{C}^{m_1+\cdots+m_l}$, that is $z=(\tilde{z_1},\dots,\tilde{z_l})$. Let $n=m_1+\cdots+m_l+n'$, $n-n'\ge1$, and $n'\ge1$, we define the \textit{$n$-dimensional Hartogs triangle} by 
\[
\mathbb{H}_{\phi_j}^n=\left\{(z,z')\in\mathbb{C}^{m_1+\cdots+m_l+n'}\,:\,\max_{1\le j\le l}\abs{\phi_j(\tilde{z_j})}<\abs{z'_1}<\abs{z'_2}<\cdots<\abs{z'_{n'}}<1\right\}.
\]

Following the same idea, we see that the Bergman projection $\mathcal{B}$ on $\mathbb{H}_{\phi_j}^n$ is bounded from $W^{k,p}(\mathbb{H}_{\phi_j}^n)$ to $W^{k,p}(\mathbb{H}_{\phi_j}^n,\abs{z'_1}^{kp})$ for $p\in(\frac{2n}{n+1},\frac{2n}{n-1})$. However, the weight $\abs{z'_1}$ is no longer comparable to $\abs{(z,z')}$, the distance from points near the boundary to the singularity at the boundary.
\end{remark}

\bibliographystyle{plain}

\end{document}